\newtheorem{thm}{Theorem}
\newtheorem{cor}[thm]{Corollary}
\newtheorem{lem}[thm]{Lemma}
\newtheorem{prop}[thm]{Proposition}
\theoremstyle{definition}
\newtheorem{defn}[thm]{Definition}
\newtheorem{rem}[thm]{Remark}
\newtheorem*{theorem*}{Theorem}
\theoremstyle{remark}
\numberwithin{equation}{section}
\def\<{\langle}
\def\>{\rangle}
\begin{document}
\title[]{Minimal invariant subspaces for an affine composition operator}
\author{João R. Carmo, Ben Hur Eidt and  S. Waleed Noor}%
\address{IMECC, Universidade Estadual de Campinas, Campinas-SP, Brazil.}
\email{$\mathrm{joao.mr2@hotmail.com}$ (João R. Carmo)}
\email{$\mathrm{b264387@dac.unicamp.br}$ (Ben Hur Eidt)} 
\email{$\mathrm{waleed@unicamp.br}$ (Waleed Noor) Corresponding Author}

\begin{abstract}  The composition operator $C_{\phi_a}f=f\circ\phi_a$ on the Hardy-Hilbert space $H^2(\mathbb{D})$ with affine symbol $\phi_a(z)=az+1-a$ and $0<a<1$ has the property that the Invariant Subspace Problem for complex separable Hilbert spaces holds if and only if every minimal invariant subspace for $C_{\phi_a}$ is one-dimensional. These minimal invariant subspaces are always singly-generated $ K_f := \overline{\mathrm{span} \{f, C_{\phi_a}f, C^2_{\phi_a}f, \ldots \}}$ for some $f\in H^2(\mathbb{D})$. In this article we characterize the minimal $K_f$ when $f$ has a nonzero limit at the point $1$ or if its derivative $f'$ is bounded near $1$. We also consider the role of the zero set of $f$ in determining $K_f$. Finally we prove a result linking universality in the sense of Rota with cyclicity.
\end{abstract}

{\subjclass[2010]{Primary; Secondary}}
\keywords{Hardy space,  Invariant subspace problem, universal operator, composition operator.}
\maketitle{}

\section{Introduction}
The \textit{Invariant Subspace Problem} (ISP) is one of the major open problems in operator theory. Let $H$ be a complex separable Hilbert space. If $T \in B(H)$ (a bounded linear operator on $H$), does $T$ have a non-trivial invariant subspace? 

By a non-trivial invariant subspace we mean a closed subspace $M \subseteq H$ such that $M \neq \{0\}$, $M \neq H$ and $T(M) \subseteq M$. We note that the only open case is when $H$ is separable. There are several possible approaches to the ISP and the monograph of Partington and Chalendar \cite{modernisp} describe some of the modern ones. One such approach is based on the concept of a \textit{universal operator} introduced by Rota \cite{rota}.

\begin{defn}
	Let $H$ be a complex, separable and infinite dimensional Hilbert space. An operator $U \in B(H)$ is said to be \textbf{universal} for $H$ if for any $T \in B(H)$ there exists an invariant subspace $M$ of $U$,  a complex number $\alpha$  and an isomorphism $S: M \to H$  such that $\alpha T = S U_{|M} S^{-1}$, i.e. $U$ restricted to $M$ is conjugate to a multiple of $T$.
\end{defn}

There exists a connection between universal operators and the ISP. The ISP  holds true for $H$ if and only if each \emph{minimal} invariant subspace $M$ of $U$ is one-dimensional. Here, the minimality of $M$ implies that it contains no proper $U$-invariant subspace. Until recently the main method for identifying a universal operator has been the Caradus criterion (see \cite{caradus}). More recently Pozzi \cite{pozzi} generalized this classical result and obtained the following theorem that we will call the \textit{Caradus-Pozzi criterion}.

\begin{thm}\label{Caradus-Pozzi}
	An operator  $U \in B(H)$ is universal if it satisfies:
	\begin{enumerate}
		\item $Ker \ U$ is infinite-dimensional.
		\item $U(H)$ has finite codimension (and is hence closed).
	\end{enumerate}
	
\end{thm}
In this paper, we will deal with the Hardy-Hilbert space $H^2:=H^2(\mathbb{D})$ of  holomorphic functions $f : \mathbb{D} \to \mathbb{C}$ on the open unit disk $\mathbb{D}$ that satisfy
$$\|f\|_{H^2}^2 = \sup\limits_{0 < r < 1} \frac{1}{2\pi} \int\limits_{0}^{2\pi} |f(re^{i \theta} )|^2 d\theta < \infty.$$
Let $H^\infty$ denote the space bounded holomorphic functions on $\mathbb{D}$. If $\phi:\mathbb{D} \to \mathbb{D}$ is a holomorphic self-map of $\mathbb{D}$, then the \textit{composition operator} with \textit{symbol} $\phi$ is defined by $C_{\phi}(f) = f \circ \phi$. By Littlewood's subordination theorem $C_{\phi}$ is always bounded on $H^2$. The study of composition operators centers around the interaction between the function-theoretic properties of the symbol $\phi$ and the operator-theoretic properties of $C_\phi$. 

Let $LFT(\mathbb{D})$ denote the set of all linear fractional self-maps of $\mathbb{D}$. If $\phi \in LFT(\mathbb{D})$, we say that $\phi$ is \textit{hyperbolic} if it
has two distinct fixed points outside $\mathbb{D}$.
Due to the remarkable work of Nordgren, Rosenthal and Wintrobe (see \cite{Nordgren-Rosenthal-Wintrobe}) we know that if $\phi \in LFT(\mathbb{D})$ is a \textit{hyperbolic automorphism} (both fixed points on the unit circle $S^1$) then $C_{\phi} - \lambda$ is universal for every $\lambda$ in the interior of the spectra of $\phi$. Recently, this result was extended by Carmo and Noor \cite[Theorem 3.1]{noor} to non-automorphic hyperbolic self-maps.

\begin{thm}\label{xxx}
	Let $\phi \in LFT(\mathbb{D})$. Then $C_{\phi} - \lambda$ is universal on $H^2(\mathbb{D})$ for some $\lambda \in \mathbb{C}$ if and only if $\phi$ is hyperbolic.
\end{thm}

The hyperbolic automorphism induced $C_\Phi$ has been studied extensively during the last thirty years where
$$\Phi(z) = \frac{z + b}{bz + 1}$$
with $0 < b < 1$ and having fixed points at $1$ and $-1$ (see \cite{Chkliar},\cite{Gallardo-Gorkin},\cite{Matache minimal},\cite{Matache eigenfunction},\cite{Mortini}). Our focus here is on the non-automorphic case where $\phi$ has a fixed point in $S^1$ and another one outside $\overline{\mathbb{D}}$ (possibly at $\infty$). In that case, Hurst \cite[Theorem 8]{Hurst} proved  that $C_{\phi}$ is similar to $C_{\phi_a}$ where 
\[
\phi_a(z) = az + 1 - a, \ \ \  a\in (0,1)
\] 
and $1,\infty$ are the fixed point of $\phi_a$. By Theorem \ref{xxx} one can approach the ISP using the symbol $\phi_a$. Noting that $\mathrm{Lat}(C_{\phi_a}) = \mathrm{Lat}(C_{\phi_a} - \lambda)$ for any $\lambda\in\mathbb{C}$, where $\mathrm{Lat}(T)$ is the collection of all invariant subspaces of an operator $T$, the following was shown in \cite{noor}.

\begin{thm}
	For any $a \in (0,1)$, the $ISP$ has a positive solution if and only if every minimal invariant subspace of $C_{\phi_a}$ has dimension 1. 
\end{thm}
If $M$ is a minimal invariant subspace of $C_{\phi_a}$, then we necessarily have $$M = K_f := \overline{span \{f, C_{\phi_a}f, C^2_{\phi_a}f, \ldots \}}$$ 
for every nonzero $f \in M$. So to study the minimal invariant subspaces we need to understand the so-called \emph{cyclic} subspaces $K_f$ for $f \in H^2$. In fact, we need only consider $f$ that are analytic across each point of $S^1$ except $1$ (see \cite[Prop. 5.1]{noor}).

The plan of the paper is the following. In Section 2 we will introduce some preliminary results and definitions. In particular the Nevanlinna counting function and the notion of an \emph{eventually bounded function} (simply $\mathbf{EB}$) are introduced. When $f\in H^2$ is $\mathbf{EB}$, then $K_f$ always contains an $H^\infty$ function (Proposition \ref{bounded element}). Curiously functions with $\mathbf{EB}$ derivatives can never be hypercyclic vectors for $C_{\phi_a}$ (see  Proposition \ref{hypercyclic vector}). In Section 3, we prove the main results regarding the minimal invariant subspaces of $C_{\phi_a}$. In particular, we prove the veracity of the equivalence
$$K_f \textrm{ is minimal }\Longleftrightarrow \mathrm{dim} \ K_f = 1.$$
for a variety of classes of $f\in H^2$ including those with non-zero boundary limits at $1$ (Theorem \ref{limit1}), when $f'$ is $\mathbf{EB}$ and $(f(1-a^n))_{n\in\mathbb{N}}$ is bounded away from $0$ (Corollary \ref{CORAC}), and when $f$ is analytic at $1$ (Theorem \ref{analyticat1}). In Section 4 we prove results that connect the zero set of $f$ with properties of $K_f$. Finally in Section 5 we provide a sufficient condition for a cyclic operator on a Hilbert space to be universal (see Theorem \ref{newcar}). This is relevant since the best known examples of universal operators are similar to coanalytic Toeplitz operators, and these are always cyclic (see \cite{Wogen}).

\section{Preliminaries}
\subsection{The Nevannlina counting function} In the Shapiro's seminal work \cite{Shapiro Annals} the essential norm of a composition operator was determined using the Nevannlina counting function. It will play an important role here also. For a holomorphic map $\phi: \mathbb{D} \to \mathbb{D}$ and each $w \in \mathbb{D}\setminus \{\phi(0) \}$, define 
$$N_{\phi}(w) = \begin{cases}
	\sum\limits_{z \in \phi^{-1}\{w\}} \log \frac{1}{|z|}  \,\    \textrm{ if  } w \in \phi(\mathbb{D}) \setminus\{\phi(0)\}. \\
	\ \ \ \ 0 \,\ \ \ \ \ \ \ \ \ \ \ \ \ \textrm{ if  }w \notin \phi(\mathbb{D}).
\end{cases}
$$
We shall need the following two results.

\begin{thm}[\cite{Shapiro Text}, section 10.1 ]\label{NVF}
	If $\phi: \mathbb{D} \to \mathbb{D}$ is analytic then $\forall f \in H^2(\mathbb{D})$ we have
	
	$$\|C_{\phi}f\|_{H^2}^2= 2 \int\limits_{\mathbb{D}}|f'(w)|^2 N_{\phi}(w)dA(w) + |f(\phi(0))|^2.$$
\end{thm}

\begin{thm}[\cite{Shapiro Text}, section 10.3]\label{CVF} 
	If $f$ is a non-negative measurable function in $\mathbb{D}$ and $\phi$ is a holomorphic self-map of $\mathbb{D}$ then  $$\begin{aligned}
		\int\limits_{\mathbb{D}} f(w)N_{\phi}(w)dA(w) & =  \int\limits_{\mathbb{D}} f(\phi(z))|\phi'(z)|^2 \log{\frac{1}{|z|}}dA(z) 
	\end{aligned}$$
\end{thm}

We will specialize these formulas for our symbol $\phi_a$ in the following sections. 

\subsection{Eventually bounded functions} For each $a \in (0,1)$ and $n \in \mathbb{N}$, note that $\underbrace{\phi_a \circ \ldots \circ \phi_a}_{n \textrm{ times }} = \phi_{a^n}$ and hence 
$$C_{\phi_{a^n}} = C_{\phi_a}^{n}.$$
For each $n \in \mathbb{N}$, define the disk $D_n := \phi_{a^n}(\mathbb{D}) = a^n\mathbb{D}+1-a^n$ with center $1-a^n$ and radius $a^n$ in $\mathbb{C}$. We call a holomorphic function $g$ defined on $\mathbb{D}$ \textit{eventually bounded at $1$} (or simply $\mathbf{EB}$) if $g$ is bounded on $D_{n}$ for some $n \in \mathbb{N}$ (and consequently bounded on $D_m$ for each $m \geq n$). Note that every $H^\infty$ function is trivially $\mathbf{EB}$. In order to employ Theorems \ref{NVF} and \ref{CVF}, we will need $f'$ to be $\mathbf{EB}$. This in fact implies that $f$ is also $\mathbf{EB}$ since if $f'$ is bounded on some $D_n$, then
\[
|f(z)|\leq|z-z_0|\sup_{w\in D_n}|f'(w)|+|f(z_0)|
\]
for any $z\in D_n$ and some fixed $z_0\in D_n$. 
\begin{center}
	\begin{tikzpicture}
		\draw[dotted] (0,0) circle [radius=3cm];
		\draw[dotted] (1.5, 0) circle [radius=1.5cm];
		\draw[dotted](2.25, 0) circle [radius=0.75cm];
		\draw[dotted](2.675, 0) circle [radius= 0.325cm];
		\draw[->] (3,-3.5) -- (3,3.5);
		\node[label={right: $\{z \,\ | \,\ Re(z) = 1 \}$} ] at (3 ,3) {};
		\node[label={left: $\mathbb{D}$} ] at (-3,0) {}; 
		\node[label={left: $D_1$} ] at (0,0) {};
		\node[label={left: $D_2$} ] at (1.5,0) {};
		\node[label={left: $D_3$} ] at (2.5 ,0) {};
		\node[label={right: $1$} ] at (3 ,0) {};
		\node[label={below: The disks $D_n = \phi_{a^n}(\mathbb{D})$ shrinking to the point 1.} ] at (0 ,-3.5) {};
	\end{tikzpicture}
\end{center}
Note that if $f$ is $\mathbf{EB}$ then $C_{\phi_{a^n}}f\in H^\infty$ for some $n\in\mathbb{N}$. Hence we immediately get the following result about minimal invariant subspaces of $C_{\phi_a}$.

\begin{prop}\label{bounded element}
	Let $M\in\mathrm{Lat}(C_{\phi_a})$. Then $M$ contains an $\mathbf{EB}$ function if and only if it contains an $H^\infty$ function. Additionally if $M$ is minimal, then $M=K_f$ for some $f\in H^\infty$.
\end{prop}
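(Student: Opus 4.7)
The plan is to exploit the geometric observation made just before the proposition: since $\phi_{a^n}(\mathbb{D})=D_n$ and the disks $D_n$ shrink toward $1$, any function that is merely bounded on some $D_n$ becomes, after composition with $\phi_{a^n}$, bounded on all of $\mathbb{D}$. Invariance of $M$ under $C_{\phi_a}$ will then automatically promote an $\mathbf{EB}$ element of $M$ to an $H^\infty$ element of $M$.

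One direction of the equivalence is trivial: every $h\in H^\infty$ is bounded on $\mathbb{D}\supset D_n$ for every $n$, hence is $\mathbf{EB}$. For the reverse direction, let $f\in M$ be a nonzero $\mathbf{EB}$ function, bounded on $D_n$ for some $n\in\mathbb{N}$. Then
\[
C_{\phi_a}^n f \;=\; C_{\phi_{a^n}}f \;=\; f\circ\phi_{a^n}
\]
is bounded on $\mathbb{D}$ because the image of $\phi_{a^n}$ is $D_n$, so $C_{\phi_a}^n f\in H^\infty$. Closedness and $C_{\phi_a}$-invariance of $M$ place this function in $M$, and it is nonzero by the identity principle, since $f\circ\phi_{a^n}\equiv 0$ would force $f$ to vanish on the open set $D_n\subset\mathbb{D}$, and hence everywhere.

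For the ``moreover'' clause, the previous step produces, from any nonzero $\mathbf{EB}$ generator of the minimal $M$, a nonzero $h\in M\cap H^\infty$. Then $K_h$ is a nonzero closed $C_{\phi_a}$-invariant subspace of $M$, so minimality forces $K_h=M$ and we may take $f=h\in H^\infty$. The argument is essentially structural and carries no real obstacle; the only point worth pausing over is the nontriviality of $C_{\phi_a}^n f$, which is handled via the identity principle as above.
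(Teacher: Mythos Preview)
Your argument is correct and coincides with the paper's intended reasoning: the paper does not write out a proof but states the proposition as an immediate consequence of the observation that if $f$ is $\mathbf{EB}$ then $C_{\phi_{a^n}}f\in H^\infty$ for some $n$, which is exactly the mechanism you spell out. Your added remarks on nontriviality via the identity principle and on the ``moreover'' clause via minimality simply make explicit what the paper leaves implicit.
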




It is known that the operator $C_{\phi_a}$ is a hypercyclic operator, i.e, there exists $f \in H^2$ such that the orbit $(C_{\phi_a}^nf)_{n\geq0}$ is dense in $H^2$. Such $f$ are called hypercyclic vectors for $C_{\phi_a}$ and they form a dense subset of $H^2$ (see \cite{lineardynamics} or \cite{shapirocyclic}). The next result shows that hypercyclicity of $f$ and the eventual boundedness of its derivatives are incompatible.

\begin{prop}\label{hypercyclic vector}
	If any derivative of $f$ is $\mathbf{EB}$, then $f$ is not a hypercyclic vector.
\end{prop}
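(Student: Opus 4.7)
My plan is to identify a bounded linear functional $\ell$ on $H^2$ whose image of the orbit $\{C_{\phi_a}^m f\}_{m\geq 0}$ is bounded whenever some derivative $f^{(k)}$ is $\mathbf{EB}$; since hypercyclicity of $f$ would instead force $\{\ell(C_{\phi_a}^m f)\}_{m\geq 0}$ to be dense in $\mathbb{C}$, this gives the contradiction.

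First I would compute the iterated derivative explicitly. Since $C_{\phi_a}^m = C_{\phi_{a^m}}$ and $\phi_{a^m}(z) = a^m z + 1 - a^m$ is affine with constant derivative $a^m$, repeated application of the chain rule yields
\[
(C_{\phi_a}^m f)^{(k)}(z) = a^{mk}\, f^{(k)}(\phi_{a^m}(z)), \quad z \in \mathbb{D}.
\]
Setting $z = 0$ and using $\phi_{a^m}(0) = 1 - a^m$ gives $(C_{\phi_a}^m f)^{(k)}(0) = a^{mk}\, f^{(k)}(1-a^m)$. The functional $\ell(h) := h^{(k)}(0) = k!\,\hat h(k)$ is bounded and surjective on $H^2$ (with $\hat h(k)$ the $k$th Taylor coefficient), since $|\ell(h)|\leq k!\,\|h\|_{H^2}$.

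Next I would use the hypothesis: if $f^{(k)}$ is $\mathbf{EB}$, there exist $N \in \mathbb{N}$ and $M > 0$ with $|f^{(k)}| \leq M$ on $D_N$. For $m \geq N$ the point $1 - a^m$ is the center of $D_m \subseteq D_N$, so $|\ell(C_{\phi_a}^m f)| \leq a^{mk} M \leq M$; together with the finitely many initial terms, the set $\{\ell(C_{\phi_a}^m f) : m \geq 0\}$ is bounded in $\mathbb{C}$.

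To conclude, if $f$ were hypercyclic then any $g \in H^2$ could be approximated by a subsequence $C_{\phi_a}^{n_j} f \to g$, and continuity of $\ell$ would force $\ell(C_{\phi_a}^{n_j} f) \to \ell(g)$. Choosing $g$ with $|\ell(g)|$ strictly larger than the bound obtained above yields the contradiction. I do not anticipate substantive obstacles; the only piece of care needed is the boundedness of $\ell$, which is immediate from the Cauchy-type coefficient estimate in $H^2$.
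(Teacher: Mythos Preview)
Your proposal is correct and is essentially the same argument as the paper's: both use the bounded functional $\ell(h)=h^{(k)}(0)=k!\,\langle h,e_k\rangle$ together with the chain-rule identity $(C_{\phi_a}^m f)^{(k)}(0)=a^{mk}f^{(k)}(1-a^m)$. The only cosmetic difference is the packaging of the contradiction: the paper picks the specific target $e_k(z)=z^k$ and observes that $\langle C_{\phi_a}^{n_j}f,e_k\rangle$ would have to tend simultaneously to $0$ and to $\|e_k\|^2$, whereas you argue that the $\ell$-image of the full orbit is bounded while hypercyclicity would force it to be all of $\mathbb{C}$.
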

\begin{proof}
	Let $f^{(n)}$ be $\mathbf{EB}$ for some $n \geq 1$ and consider $e_n(z) = z^n$. Suppose on  the contrary that $f$ is a hypercyclic vector. Then there exists a subsequence $(C_{\phi_a}^{k_l}f)_{l \in \mathbb{N}}$ such that $C_{\phi_a}^{k_l}f \to e_n$ as $l \to \infty$. Note that for every $g \in H^2$ we have $\langle g, e_n \rangle = \frac{g^{(n)}(0)}{n!}$. So
	$$\langle C_{\phi_a}^{k_l}f, e_n \rangle = \frac{\left( C_{\phi_a}^{k_l}f \right)^{(n)}(0)}{n!} = \frac{(a^{k_l})^{n} f^{(n)} \circ \phi_{a^{k_l}}(0) }{n!} \xrightarrow[]{l \to \infty}0$$
	because $f^{(n)}$ is $\mathbf{EB}$ and $(a^{k_l})^{n} = (a^{n})^{k_l} \to 0$ as $l \to \infty$. On the other hand
	$$\langle C_{\phi_a}^{k_l}f, e_n \rangle \to \langle e_n, e_n\rangle = \|e_n\|^2.$$
	Thus $e_n = 0$ which is absurd. This contradiction establishes the result.
	\end{proof}

However, there exist cyclic vectors for $C_{\phi_a}$ (whose orbits are complete in $H^2$) with derivatives all $\mathbf{EB}$. Recall that for each $\alpha \in \mathbb{D}$, the reproducing kernel at $\alpha$ is defined by $\kappa_\alpha(z) =  \frac{1}{1 - \overline{\alpha}z}$ and each $\kappa_\alpha^{(n)}$ is $\mathbf{EB}$ for $n \in \mathbb{N}$ since $\kappa_\alpha$ is analytic on $S^1$. 

\begin{prop}\label{rparecyclic}
	Let $\kappa_\alpha \in H^2$ be a reproducing kernel. Then $\kappa_\alpha$ is a cyclic vector for $C_{\phi_a}$ if and only if $\alpha \neq 0$.
\end{prop}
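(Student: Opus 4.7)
The plan is to exploit the concrete form of $\kappa_\alpha$ and compute the orbit $(C_{\phi_a}^n\kappa_\alpha)_{n\geq 0}$ explicitly. Each iterate turns out to be a nonzero scalar multiple of another reproducing kernel, so cyclicity will reduce to a uniqueness statement for an explicit sequence of points in $\mathbb{D}$.

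First I would dispose of the easy direction $\alpha = 0$: since $\kappa_0\equiv 1$ and $C_{\phi_a}$ fixes constants, the cyclic subspace is $K_{\kappa_0}=\mathbb{C}\neq H^2$, so $\kappa_0$ is not cyclic.

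For the converse direction, suppose $\alpha\neq 0$. Using $\phi_{a^n}(z)=a^nz+1-a^n$, a direct calculation yields
$$C_{\phi_a}^n\kappa_\alpha(z)=\frac{1}{1-\overline{\alpha}\phi_{a^n}(z)}=\lambda_n\,\kappa_{w_n}(z), \qquad w_n=\frac{\alpha a^n}{1-\alpha(1-a^n)},$$
where $\lambda_n=(1-\overline{\alpha}(1-a^n))^{-1}\neq 0$. Since each $C_{\phi_a}^n\kappa_\alpha$ is in $H^2$ and $\lambda_n\neq 0$, it is automatic that $w_n\in\mathbb{D}$. Therefore $K_{\kappa_\alpha}=\overline{\mathrm{span}}\{\kappa_{w_n}:n\geq 0\}$, and by the reproducing property its orthogonal complement is
$$K_{\kappa_\alpha}^\perp=\{f\in H^2 : f(w_n)=0 \text{ for every } n\geq 0\}.$$

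The final step is a uniqueness argument. The hypothesis $\alpha\neq 0$ forces $w_n\neq 0$ for every $n$, while $a^n\to 0$ guarantees $w_n\to 0\in\mathbb{D}$. Hence $\{w_n\}$ is an infinite set of nonzero points with an accumulation point inside $\mathbb{D}$, and the identity theorem forces any $f\in K_{\kappa_\alpha}^\perp$ to vanish identically. Thus $K_{\kappa_\alpha}=H^2$, i.e.\ $\kappa_\alpha$ is cyclic.

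I do not anticipate any serious obstacle; the only bookkeeping required is to check that $w_n\in\mathbb{D}$ (which is free, as noted) and that the $w_n$ are not eventually all equal (which is immediate from $w_n\neq 0$ and $w_n\to 0$). The argument does not need any of the Nevanlinna machinery of Section 2, only the concrete action of $C_{\phi_a}$ on reproducing kernels.
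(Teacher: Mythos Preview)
Your proof is correct and follows essentially the same approach as the paper: both compute the orbit of $\kappa_\alpha$ as scalar multiples of reproducing kernels $\kappa_{w_n}$ with $w_n=\alpha a^n/(1-\alpha(1-a^n))$, note $w_n\in\mathbb{D}$ automatically because $C_{\phi_a}^n\kappa_\alpha\in H^2$, and then use the identity theorem on the zero set $\{w_n\}$ accumulating at $0$ to conclude $K_{\kappa_\alpha}^\perp=\{0\}$.
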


\begin{proof}
	If $\kappa_\alpha$ is cyclic then $\alpha \neq 0$, otherwhise  $\kappa_\alpha = \kappa_0 = 1$ and $K_{\kappa_\alpha}$ is the space of constants functions. For the other direction, let $\kappa_\alpha$ with $\alpha \neq 0$. Note that $1 - \overline{\alpha} + \overline{\alpha}a \neq 0$, otherwise  $1 = \overline{\alpha}(1 - a)$ and this is not possible because $\overline{\alpha}, (1 - a) \in \mathbb{D}$. Thus
	$$\kappa_\alpha \circ \phi_a(z) = \frac{1}{1 - \overline{\alpha}(az +1 - a)} = \frac{1}{1 - \overline{\alpha} + \overline{\alpha}a - \overline{\alpha}az} = \frac{1}{1 - \overline{\alpha} + \overline{\alpha}a} \left( \frac{1}{1 - \frac{\overline{\alpha}az}{1 - \overline{\alpha} + \overline{\alpha}a}} \right).$$
	We observe that a function of the form $h(z) = \frac{1}{1 - \overline{y}z}$ where $y \in \mathbb{C}$ belongs to $H^2$ if, and only if, $y \in \mathbb{D}$; so $\frac{\overline{\alpha}a}{1 - \overline{\alpha} + \overline{\alpha}a} \in \mathbb{D}$. Consequently, for every $a \in (0,1)$ we have
	$$ \kappa_\alpha  \circ \phi_a =  \frac{1}{1 - \overline{\alpha} + \overline{\alpha}a} \kappa_{ \frac{\alpha a}{1 - {\alpha} + {\alpha}a}}.$$
	Now let $f \in H^2$ such that $\langle f,  \kappa_\alpha  \circ \phi_{a^n} \rangle = 0.$ Then $f( \frac{\alpha a^n}{1 - \alpha + \alpha a^n}) = 0$ for every $n \in \mathbb{N}$. As the sequence  $ \{ \frac{\alpha a^n}{1 - \alpha + \alpha a^n} \}_{n \in \mathbb{N}}$ is a sequence of distinct points (because $\alpha \neq 0$), $\frac{\alpha a^n}{1 - \alpha + \alpha a^n} \to 0$ and $f$ is analytic at $0$ we conclude that $f = 0$. Thus $K_{\kappa_\alpha } = H^2$.
\end{proof}

\section{Minimal invariant subspaces of $C_{\phi_a}$}

From now on, consider $a \in (0,1)$ fixed. Recall that the ISP has a positive solution if and only if every minimal invariant subspace of the operator $C_{\phi_a}$ is one-dimensional. Also every minimal invariant subspace is a cyclic susbspace of the form $K_f$ where $f \in H^2$ and 
$$K_f := \overline{\mathrm{span} \{f, C_{\phi_a}f, C^2_{\phi_a}f, \ldots \}}.$$
Indeed, if $M$ is a minimal invariant subspace of $C_{\phi_a}$ then $M = K_f$ for every $f \neq 0$ such that $f \in M$. In this section we will present a class of functions $f$ for which we know that the equivalence $$K_f \textrm{ is minimal} \Longleftrightarrow \mathrm{dim}  \ K_f = 1$$ is true. Our first main result is the following.

\begin{thm}\label{limit1}
	If $g \in H^2(\mathbb{D})$ with $\lim\limits_{z \to 1} g(z)=L\neq 0 $ ($z\to 1$ within $\mathbb{D}$), then $K_g$ contains the constants. In particular if $K_g$ is minimal, then $g\equiv L$ and $\mathrm{dim} \ K_g = 1$. 
\end{thm}

\begin{proof}
	There exists a $\delta > 0$ such that if $z \in \mathbb{D}$ and $|z - 1| < \delta$ then $|g(z) - L| < 1$. In particular, for every $z \in B(1,\delta) \cap \mathbb{D}$ we obtain
	
	$$|g(z)| \leq |g(z) - L)| + |L| < 1 + |L| =: K$$
	As $\phi_{a^n}(\mathbb{D}) = a^n \mathbb{D} + 1 -a^n$ are circles with center $1 - a^n$ (converging to $1$) and radius $a^n$ (converging to $0$) there exists $n_0 \in \mathbb{N}$ such that for all $n \geq n_0$ we have $\phi_{a^n}(\mathbb{D}) \subseteq B(1,\delta) \cap \mathbb{D}$. So, given $re^{i\theta} \in \mathbb{D}$ we conclude that $a^nre^{i\theta} + 1 - a^n \in B(1,\delta) \cap \mathbb{D}$ and so
	$$|g \circ \phi_{a^n}(re^{i\theta})|^2 = |g(a^nre^{i\theta} + 1 - a^n)|^2 \leq K^2 \,\ \,\ \forall n \geq n_0.$$
	Consequently we get
	\[
	\| C_{\phi_{a^n}}g \|^2 = \| g\circ \phi_{a^n} \|^2 = \sup \limits_{0 < r < 1} \frac{1}{2\pi}\int\limits_{0}^{2\pi} | g\circ \phi_{a^n} (re^{i\theta}) |^2 d\theta \leq K^2 \,\ \,\ \forall n \geq n_0.
	\]
	This shows that the sequence $ \left( C_{\phi_{a^n}}g \right)_{n \in \mathbb{N}}$ is bounded. Moreover, for each $z \in \mathbb{D}$ we have $a^nz + 1 - a^n \to 1$ as $n \to \infty$ and then
	$$  g\circ \phi_{a^n}(z) = g(a^nz + 1 - a^n) \to L.$$ 
	So $ \left( C_{\phi_{a^n}}g \right)_{n \in \mathbb{N}}$ converges pointwise to $L$. By (\cite{Cowen-Maccluer}, Corollary $1.3$) $ \left( C_{\phi_{a^n}}g \right)_{n \in \mathbb{N}}$ converges weakly to the constant function $L$. So $L$ belongs to the weak closure of the convex set $\mathrm{span}_{n\geq 0} (C_{\phi_a}^ng)$ which is equal to the norm closure by Mazur's Theorem. So $L \in K_g$ and hence $K_g = K_{L}$ since $L \neq 0$.
\end{proof}

Every function $g \in A\mathbb(\mathbb{D})$ (where $A(\mathbb{
	D})$ denotes the disk algebra) such that $g(1) \neq 0$ satisfies the above hypothesis. So for such functions $K_g$ is minimal if and only if $g$ is a constant. Notice that the theorem above does not cover polynomials $g$ that vanish at $1$. However the polynomial case in general is solved easily by the following remark.

\begin{rem}
	
	For every polynomial $p$ and $a \in (0,1)$, the function $p \circ \phi_a$ is again a polynomial with degree equal to that of $p$. In particular, $K_p \subseteq P(n)$ where $P(n)$ denotes the space of polynomials of degree at most $n=\mathrm{deg}(p)$. This implies that $dim \,\ K_p < \infty$. Thus $K_p$ is minimal precisely when $\mathrm{dim} \ K_p = 1$ since it will contain an eigenvector.

\end{rem}




Following Theorem \ref{limit1}, it is clear that the boundary behaviour of $g\in H^2$ at $1$ plays a key role in characterizing $K_g$. Hence we propose the following three cases:

\begin{itemize}
	\item[(A)] $g(1 - a^n)$ converges to a number $L \neq  0$.
	\item[(B)] $g(1 - a^n)$ converges to 0.
	\item[(C)] $g(1 - a^n)$ does not converges.
\end{itemize}
These three cases cover all possibilities. Recall that $g'$ is $\mathbf{EB}$ if $g'$ is bounded on $D_{n}$ for some $n\in \mathbb{N}$. We begin our analysis with a lemma.


\begin{lem}\label{lemmanv}
	If $g \in H^2$ is such that $g'$ is $\mathbf{EB}$, then 
	
	$$ \int_{\mathbb{D}}\left| {g'(w)} \right|^2N_{\phi_{a^{n}}}(w) d A(w) \to 0 \,\ \textrm{ as } n \to \infty.$$
\end{lem}

\begin{proof}
	
	Let $n_0  \in \mathbb{N}$ such that $g'$ is bounded in $D_{n_0}$. Applying Theorem \ref{CVF} for $f = |g'|^2$ and $\phi = \phi_{a^n}$ we obtain:
	\begin{align*}
		\int\limits_{\mathbb{D}} |g'(w)|^2N_{\phi_{a^{n}}}(w)dA(w) & =  \int\limits_{\mathbb{D}} |g'(a^nz +1 - a^n)|^2 |\phi_{a^n}'(z)|^2 \log{\frac{1}{|z|}}dA(z) \\
		& \leq  \int\limits_{\mathbb{D}} C a^{2n} \log{\frac{1}{|z|}}dA(z) = a^{2n}K  \,\ \,\ \,\ ( n \geq n_0 )
	\end{align*}  
	where $C$ is a constant such that $|g'(a^nz + 1 - a^n)|^2 \leq C$ for all $n \geq n_0$ and $K$ is the constant given by $C$ times the integral of $\log \frac{1}{|z|}$ over $\mathbb{D}$. So, if $n \to \infty$ then $a^{2n} \to 0$ and we conclude the proof.    
\end{proof}
For each $s \in \mathbb{C}$  Hurst showed in (\cite[Lemma 7]{Hurst}) that $f_s(z) = (1 - z)^s$ belongs to $H^2$ if and only if $\Re(s) > -\frac{1}{2}$. For $\Re(s) > -\frac{1}{2}$ these functions are eigenvectors 
$$C_{\phi_a}f_s(z) = f_s \circ \phi_a(z) = (1 - az - 1 + a)^s = a^sf_s(z).$$
We arrive at the first central result of this section.

\begin{thm}\label{newmain}
	Suppose that $f = f_sg$ for some $g \in H^2$ and $Re(s) \geq 0$. If $g'$ is $\mathbf{EB}$ and there exists a subsequence of $(g(1 - a^n))_{n \in \mathbb{N}}$ which is bounded away from zero, then $f_s \in K_f$. So $K_f$ is minimal if and only if $\dim K_f = 1$.

	\begin{proof}
		
		By hypothesis $f \in H^2$ (because $f_s \in H^{\infty}$) and we can choose a subsequence $(g(1 - a^{n_k}))_{k \in \mathbb{N}}$ which is bounded away from zero, i.e, there exists a constant $M$ such that $|g(1 - a^{n_k})| \geq M > 0$. As $f_s$ is an eigenvector we obtain the following relation
		$$C_{\phi_{a}}^{n_k} f = C_{\phi_{a}}^{n_k} f_sg = a^{{n_k}s}f_sC_{\phi_{a}}^{n_k} g.$$
		Using the expression above, Theorem \ref{NVF} and Lemma \ref{lemmanv} we obtain
		$$
		\begin{aligned}
			\left\|\frac{C_{\phi_{a}}^{n_k} f}{a^{n_ks}g(1 - a^{n_k})} - f_s \right\|_{2}^{2} & =  \left\|\frac{f_s C_{\phi_{a}}^{n_k}g}{g(1 - a^{n_k})} - f_s \right\|_{2}^{2}
			\leq \|f_s\|^2_{\infty} \left\|C_{\phi_{a^{n_k}}} \left( \frac{g}{g(1 - a^{n_k})}- 1 \right)\right\|_{2}^{2} \\
			&=2 \|f_s\|^2_{\infty} \int_{\mathbb{D}}\left| \frac{g'(w)}{g(1 - a^{n_k})}\right|^{2} N_{\phi_{a^{n_k}}}(w) d A(w)\\
			& \leq \frac{2 \|f_s\|^2_{\infty}}{M^2} \int_{\mathbb{D}}\left| {g'(w)} \right|^2N_{\phi_{a^{n_k}}}(w) d A(w) \xrightarrow{k \to \infty } 0 .
		\end{aligned}
		$$
		This means that $f_s$ is the norm-limit of a sequence of elements in $\mathrm{span} \{f, C_{\phi_a}f, \ldots \}$. Hence $f_s \in K_f$ and by minimality we have $K_f = K_{f_s}$ which is one dimensional because $f_s$ is an eigenvector.
	\end{proof}
\end{thm}

\begin{cor}\label{CORAC}
	Suppose that $g \in H^2$ is such that $g'$ is $\mathbf{EB}$ and there exists a subsequence of $(g(1 - a^n))_{n \in \mathbb{N}}$ which is bounded away from zero, then $1 \in K_g$. So $K_g$ is minimal if and only if $\mathrm{dim} \ K_g = 1$.
\end{cor}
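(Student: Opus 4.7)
The plan is to simply specialize Theorem \ref{newmain} to the case $s = 0$. Since $f_0(z) = (1-z)^0 \equiv 1$ and $\Re(0) = 0 \geq 0$, setting $f := g = f_0 \cdot g$ trivially realizes the factorization ``$f = f_s g$'' demanded by the theorem. The remaining hypotheses of Theorem \ref{newmain} — that $g'$ be $\mathbf{EB}$ and that some subsequence of $(g(1-a^n))_{n\in\mathbb{N}}$ be bounded away from zero — coincide verbatim with those of the corollary. Theorem \ref{newmain} therefore immediately delivers $f_0 = 1 \in K_g$.

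For the equivalence, note that the constant function $1$ is an eigenvector of $C_{\phi_a}$ (with eigenvalue $1$), so $K_1 = \mathbb{C}\cdot 1$ is a one-dimensional $C_{\phi_a}$-invariant subspace sitting inside $K_g$. If $K_g$ is minimal, then this forces $K_g = K_1$, and thus $\dim K_g = 1$. The reverse implication is automatic, since any one-dimensional invariant subspace is trivially minimal.

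There is essentially no obstacle beyond confirming that the endpoint $s = 0$ is admissible in Theorem \ref{newmain}, which it is. The corollary is really just a rebranding of Theorem \ref{newmain} at its simplest case, with ``$f_s$'' replaced by the constant $1$ and ``$f$'' by ``$g$''.
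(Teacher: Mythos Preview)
Your proof is correct and is exactly the paper's approach: the paper's proof consists of the single sentence ``Apply the above theorem to $s = 0$.'' Your additional explanation of why $1 \in K_g$ forces $K_g = K_1$ under minimality simply spells out what Theorem~\ref{newmain} already states in its final clause.
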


\begin{proof}
	Apply the above theorem to $s = 0$.
\end{proof}

The result above corrects an error in \cite[Theorem 4.2]{noor} which made the conclusion incorrect. The hypothesis of $g'$ being $\mathbf{EB}$ is in fact necessary, as the following example demonstrates. 

Choose $g(z) = f_t(z) = (1 - z)^{ \frac{2 \pi i}{\log {a}}}$ where $t = \frac{2 \pi i}{ \log{a}}$ in Corollary \ref{CORAC}. Then
	$$ g( 1 - a^n) =  (a^n)^{ \frac{2 \pi i}{ \log{a} }} = e^{  \frac{2 \pi i}{\log {a}} \log{a^n}} = e^{2 \pi i n} = 1 $$
	for all $n\in\mathbb{N}$ and in particular $(g(1 - a^n))_{n \in \mathbb{N}}$ is bounded away from zero. But for $r \in (0,1)$
	$$|g'(r)| = \left| \frac{2 \pi i}{\log a} \frac{1}{r - 1} e^{\frac{2 \pi i}{\log a} \log (1 - r)} \right| = \left| \frac{2 \pi}{ \log a} \right| \left| \frac{1}{r - 1} \right| \xrightarrow{r \to 1}  \infty $$
	shows that $g'$ is not $\mathbf{EB}$. Then $K_g=\mathbb{C}g$ because $g$ is a $C_{\phi_a}$-eigenvector and $1\notin K_{g}$. 
	

As a consequence of the last corollary, we resolve cases $(A)$ and $(C)$:
\begin{thm}\label{AC}
	If $g \in H^2$ belongs to case $(A)$ or $(C)$ defined above and $g'$ is $\mathbf{EB}$, then $K_g$ is minimal if and only if $dim \,\ K_g = 1$. 
\end{thm}
\begin{proof} 	If $g$ belongs to case $(A)$ then $(g(1 - a^n))_{n \in \mathbb{N}}$ converges to a non-zero number, say $L$. Then, there exists $n_0 \in \mathbb{N}$ such that for every $n \geq n_0$, $|g(1 - a^n) - L| < \frac{|L|}{2}$ and then $|g(1 - a^n)| \geq \frac{|L|}{2} > 0$ for $n \geq n_0$. If $g$ belongs to case $(C)$ then the radial limit does not exist, and in particular it is not $0$. So we can find an $\epsilon > 0$ and a subsequence $(g(1 - a^{n_k}) )_{k \in \mathbb{N}}$ such that $|g(1 - a^{n_k})| \geq \epsilon > 0$. The result follows by Corollary \ref{CORAC}.
\end{proof}

Case $(B)$ can also be solved assuming $g$ is analytic at $1$. 

\begin{thm}\label{analyticat1}
		Suppose that $g \in H^2$ is analytic at $1$. Then $K_g$ is minimal if and only if $\mathrm{dim} \ K_g = 1$.
\end{thm}
\begin{proof}
		Notice that as $g$ is analytic at $1$, then so is $g'$. In particular $g'$ is $\mathbf{EB}$. If $g(1) \neq 0$ then case $(A)$ of the previous theorem gives the result.  Suppose that $g(1) = 0$, then $g(z) = (1 - z)^K h(z)$ on some neighborhood $V$ of $1$, where $K$ is the multiplicity of the zero $1$, $h$ is analytic at $V$ and $h(1) \neq 0$. Choose $n_0 \in \mathbb{N}$ such that for all $n \geq n_0$, $D_{n} \subset V$. Note that $h'$ is $\mathbf{EB}$ because $h'$ is also analytic at $V$. Moreover,
		$$g \circ \phi_{a^n}(z) = a^{nK}(1 - z)^K h \circ \phi_{a^n}(z).$$
		So $h \circ \phi_{a^{n}} \in H^{\infty}$ because $h$ is bounded $D_n$ (in particular, $h \circ \phi_{a^{n}} \in H^{2}$). As the function $f = f_K  h \circ \phi_{a^{n_0}} = \frac{ g \circ \phi_{a^n} }{a^{nK}} $ satisfies the hypothesis of Theorem \ref{newmain}, we conclude that $f_K \in K_{f_K h \circ \phi_{a^{n_0}}} = K_{ g \circ \phi_{a^n} } \subseteq K_g$. Thus $K_g = K_{f_K}$ by minimality and we are done.
	\end{proof}
	
	We conclude this section by considering case $(B)$ more carefully. Even with the $\mathbf{EB}$ hypothesis over $g'$, case $(B)$ appears to be delicate. However if $g(1 - a^n)$ has a subsequence that converges to $0$ at a \emph{sufficiently} slow rate, then we can still obtain a positive result.


	\begin{thm}\label{Slow decay case B}
		Suppose that $g \in H^2$ is such that $g'$ is $\mathbf{EB}$ and there exists $0<\epsilon<1$ and a constant $L > 0$ such that $|g(1 - a^{n_k})| \geq L a^{n_k(1 - \epsilon) }$ for some subsequence $(n_k)_{k \in \mathbb{N}}$. Then $1 \in K_g$, and $K_g$ is minimal if and only if $\mathrm{dim} \ K_g = 1$.
	\end{thm}
	
	\begin{proof} Using similar computations as in the proof of Theorem \ref{newmain}, we have
		$$
		\begin{aligned}
			\left\|\frac{C_{\phi_{a}}^{n_k} g}{g(1 - a^{n_k})} - 1 \right\|_{2}^{2}  
			& = 2  \int_{\mathbb{D}}\left| \frac{g'(w)}{g(1 - a^{n_k})}\right|^{2} N_{\phi_{a^{n_k}}}(w) d A(w)+\left| \frac{g \left(1-a^{n_k}\right)}{g \left(1-a^{n_k}\right) }- 1 \right|^{2} \\
			& =  2 \int_{\mathbb{D}} \frac{ |g'(w)|^2}{ |g(1 - a^{n_k})|^2} N_{\phi_{a^{n_k}}}(w) d A(w) \\
			&= 2 \int\limits_{\mathbb{D}} \frac{|g'(a^{n_k}w +1 - a^{n_k})|^2}{|g(1 - a^{n_k})|^2} |\phi_{a^{n_k}}'(w)|^2 \log{\frac{1}{|w|}}dA(w) \\
			& \leq 2 \int\limits_{\mathbb{D}} C^2  \frac{1}{L^2 a^{2n_k} a^{- 2 n_k \epsilon }} a^{2n_k }\log{\frac{1}{|w|}}dA(w) \\
			& = a^{2n_k \epsilon} M \xrightarrow{k \to \infty} 0
		\end{aligned}$$
		where $C$ is a upper bound for the values of $g'$ in some open ball $D_{n_{k_0}}$ and $M$ is a constant. This proves the result.
	\end{proof}
	
	To obtain a complete solution for case $(B)$ under the $\mathbf{EB}$ hypothesis over $g'$, it is natural to ask what happens if $g(1 - a^n)\to 0$ faster than required by Theorem \ref{Slow decay case B}. For instance if
	$$|g(1 - a^n) | \leq a^{\frac{n}{2}}$$
	then the next result shows that the series $ \sum\limits_{n = 1}^{\infty}C_{\phi_{a^n}}g$ converges in $H^2$. 
	
	\begin{prop}\label{Abs convergence hypothesis}
		Let $g \in H^2$ with $g'$ an $\mathbf{EB}$ function. Then $h:= \sum\limits_{n = 1}^{\infty}C_{\phi_{a^n}}g\in H^2$ if and only if $\sum\limits_{n = 1}^{\infty}|g(1 - a^n)|$ converges in $\mathbb{C}$. Moreover $h\in K_g$.
	\end{prop}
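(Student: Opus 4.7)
The plan is to extract a sharp two-sided estimate for $\|C_{\phi_{a^n}}g\|$ from Theorem \ref{NVF} and use it to compare absolute convergence of the series $\sum_n C_{\phi_{a^n}}g$ in $H^2$ with absolute convergence of the scalar series $\sum_n g(1-a^n)$.

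First I would apply Theorem \ref{NVF} with $\phi=\phi_{a^n}$. Since $\phi_{a^n}(0)=1-a^n$, this gives the exact identity
\[
\|C_{\phi_{a^n}}g\|^2 \;=\; 2\int_{\mathbb{D}}|g'(w)|^2 N_{\phi_{a^n}}(w)\,dA(w) \;+\; |g(1-a^n)|^2.
\]
The $\mathbf{EB}$ hypothesis on $g'$ is exactly what drives Lemma \ref{lemmanv}; repeating that computation with a fixed upper bound $C$ for $|g'|^2$ on some $D_{n_0}$ and using $|\phi_{a^n}'|^2=a^{2n}$ gives
\[
\int_{\mathbb{D}}|g'(w)|^2 N_{\phi_{a^n}}(w)\,dA(w)\;\leq\; K a^{2n} \qquad (n\geq n_0)
\]
for the constant $K=C\int_{\mathbb{D}}\log(1/|z|)\,dA(z)$. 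Taking square roots in the identity, using $\sqrt{x+y}\leq\sqrt{x}+\sqrt{y}$ on the upper side and the trivial lower bound $\|C_{\phi_{a^n}}g\|^2\geq|g(1-a^n)|^2$ on the other, yields the sandwich
\[
|g(1-a^n)|\;\leq\;\|C_{\phi_{a^n}}g\|\;\leq\;|g(1-a^n)|+\sqrt{2K}\,a^n \qquad (n\geq n_0).
\]

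Since $\sum_n a^n$ is a convergent geometric series, the sandwich forces $\sum_n\|C_{\phi_{a^n}}g\|$ and $\sum_n|g(1-a^n)|$ to converge or diverge together. Absolute convergence of $\sum_n C_{\phi_{a^n}}g$ in the Banach space $H^2$ is equivalent to $\sum_n\|C_{\phi_{a^n}}g\|<\infty$, so the series defines an element $h\in H^2$ if and only if $\sum_n|g(1-a^n)|$ converges in $\mathbb{C}$. For the remaining assertion $h\in K_g$, each partial sum $h_N=\sum_{n=1}^N C_{\phi_a}^n g$ lies in $\mathrm{span}\{C_{\phi_a}^n g:n\geq 1\}\subseteq K_g$, and $K_g$ is closed in $H^2$ by definition, so the norm-limit $h=\lim_N h_N$ also belongs to $K_g$.

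The main obstacle is essentially bookkeeping: keeping the direction of the $\sqrt{x+y}$ inequality straight so that the $|g(1-a^n)|^2$ term ends up on the correct side of the sandwich and is not absorbed into the $a^{2n}$ contribution, and making sure the constant coming out of Lemma \ref{lemmanv} is uniform in $n\geq n_0$. Once these are in place, the equivalence falls out by comparison with the geometric series and the membership $h\in K_g$ is immediate from closedness.
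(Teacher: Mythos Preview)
Your proposal is correct and follows essentially the same route as the paper: apply Theorem \ref{NVF} with $\phi=\phi_{a^n}$, bound the integral term by $Ka^{2n}$ via the $\mathbf{EB}$ hypothesis exactly as in Lemma \ref{lemmanv}, and obtain the sandwich $|g(1-a^n)|\le\|C_{\phi_{a^n}}g\|\le |g(1-a^n)|+\sqrt{2K}\,a^n$ from which both directions follow by comparison with the geometric series. You also spell out the $h\in K_g$ clause (partial sums in $K_g$, $K_g$ closed), which the paper's proof leaves implicit.
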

	\begin{proof}
		
		Suppose that $\sum\limits_{n = 1}^{\infty}|g(1 - a^n)|$ converges in $\mathbb{C}$. Using Theorems \ref{NVF} and \ref{CVF} we conclude that for all $n \geq n_0$ (where $D_{n_0}$ is a ball in which $g'$ is bounded)
		$$\begin{aligned}
			\|C_{\phi_{a^n}} g\|^2 & = \int\limits_{\mathbb{D}} |g'(w)|^2N_{\phi_{a^{n}}}(w)dA(w) + |g(1 - a^n)|^2 \\
			& =   \int\limits_{\mathbb{D}} |g'(a^nz +1 - a^n)|^2 |\phi_{a^n}'(z)|^2 \log{\frac{1}{|z|}}dA(z)  + |g(1 - a^n)|^2\\
			& \leq a^{2n} L +  |g(1 - a^n)|^2 
		\end{aligned}$$
		where $L$ is a positive constant. So
		$$\|C_{\phi_{a^n}} g\| \leq \sqrt{a^{2n} L + |g(1 - a^n)|^2} \leq a^{n} \sqrt{L} + |g(1 - a^n)| \,\ \,\ \textrm{ for } n \geq n_0.  $$
		Since $a < 1$ and $\sum\limits_{n = 1}^{\infty}|g(1 - a^n)| < \infty$, the comparison test implies
		$$\sum\limits_{n = 1}^{\infty} \|  C_{\phi_{a^n}}g \| < \infty.$$
		The reciprocal follows from
		$$\|C_{\phi_{a^n}} g\|^2 = \int\limits_{\mathbb{D}} |g'(w)|^2N_{\phi_{a^{n}}}(w)dA(w) + |g(1 - a^n)|^2 \geq |g(1 - a^n)|^2$$
		and by the comparison test again.
	\end{proof}
	Under the hypothesis of Proposition \ref{Abs convergence hypothesis} if we define $h_k:=\sum_{n=k}^\infty C_{\phi_{a^n}}g$, then $h\in H^2$ obviously implies all $h_k\in H^2$ for $k\geq 1$. In this case, if $K_g$ is a minimal invariant subspace for $C_{\phi_a}$ then we must have $K_g=K_{h_k}$ for all $k\in\mathbb{N}$. Notice that if some $h_l$ were independent of the rest (say $h_l\notin\overline{\mathrm{span}}(h_k)_{k> l}$), then $K_{h_l}$ would properly contain $K_{h_k}$ for $k>l$ and in particular $K_g$ cannot be minimal.   This leads us to conjecture that \\ \\ \emph{If $g \in H^2$ and $\sum\limits_{n = 1}^{\infty}C_{\phi_{a^n}}g\in H^2$, then $K_g$ is minimal if and only if $g$ is a $C_{\phi_a}$-eigenvector.}


\section{The role of the zero set }

In this section we present some results showing how the cardinality of the zero set of $f$ effects the dimension of $K_f$. We will use the notation $Z(f)$ to denote the set of zeros of $f$ in $\mathbb{D}$ and $|Z(f)|$ its cardinality.

\begin{prop}\label{numberof0}
	If $0 < |Z(f)| < \infty$ then $dim  \,\ K_f \geq 2$.
\end{prop}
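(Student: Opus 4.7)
The plan is to argue by contradiction using the fact that the only way $K_f$ can be one-dimensional is for $f$ to be an eigenvector of $C_{\phi_a}$, combined with the orbit structure of $\phi_a$ on $\mathbb{D}$.

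First I would observe that since $|Z(f)|>0$ we have $f\not\equiv 0$, and $K_f$ is by definition $C_{\phi_a}$-invariant. If we suppose for contradiction that $\dim K_f = 1$, then $K_f = \mathbb{C}f$ and invariance forces $C_{\phi_a}f \in \mathbb{C}f$, i.e.\ there exists $\lambda \in \mathbb{C}$ with
\[
f\circ\phi_a = \lambda f.
\]
(The eigenvalue $\lambda$ is nonzero, since otherwise $f$ would vanish on $\phi_a(\mathbb{D})$ which has an accumulation point in $\mathbb{D}$, forcing $f\equiv 0$.)

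Next I would exploit this eigenvalue equation to propagate zeros. Pick any $z_0 \in Z(f)\cap\mathbb{D}$. Evaluating at $z_0$ gives $f(\phi_a(z_0)) = \lambda f(z_0) = 0$, so $\phi_a(z_0) \in Z(f)$, and an easy induction yields $\phi_{a^n}(z_0) = \phi_a^n(z_0) \in Z(f)$ for every $n \ge 0$. The key step is to verify that the orbit $\{\phi_a^n(z_0)\}_{n\ge 0}$ consists of infinitely many distinct points in $\mathbb{D}$. If $\phi_a^m(z_0) = \phi_a^n(z_0)$ for some $m<n$, then $w := \phi_a^m(z_0) \in \mathbb{D}$ is a fixed point of $\phi_{a^{n-m}}$; but solving $a^{n-m}w + 1 - a^{n-m} = w$ gives $w=1 \notin \mathbb{D}$, a contradiction. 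Hence the orbit is an infinite subset of $Z(f)\cap\mathbb{D}$, contradicting $|Z(f)|<\infty$.

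I do not expect any serious obstacle here; the argument is essentially a transport-of-zeros argument along the orbit of $\phi_a$. The only small point to get right is the non-existence of a periodic orbit in $\mathbb{D}$, which is handled by the explicit form of $\phi_{a^k}$ and the fact that its only fixed points are $1$ and $\infty$. The conclusion $\dim K_f \ge 2$ then follows since a one-dimensional $K_f$ has been ruled out.
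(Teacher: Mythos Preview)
Your argument is correct, but it proceeds quite differently from the paper. The paper gives a direct proof: since $Z(f)$ is finite, the shrinking disks $D_n=\phi_{a^n}(\mathbb{D})$ eventually miss every zero of $f$, so for large $n$ the function $f\circ\phi_{a^n}$ is zero-free on $\mathbb{D}$; evaluating a hypothetical relation $\alpha f+\beta\, f\circ\phi_{a^n}=0$ at some $z_0\in Z(f)$ then forces $\beta=0$ and hence $\alpha=0$, producing an explicit linearly independent pair $\{f,\,f\circ\phi_{a^n}\}\subset K_f$. You instead argue by contradiction via the eigenvector characterization of one-dimensional $K_f$ and then push a single zero forward along the $\phi_a$-orbit, using that $\phi_{a^k}$ has no fixed point in $\mathbb{D}$ to get infinitely many distinct zeros. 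Both routes are elementary; the paper's has the small advantage of exhibiting concrete independent vectors in $K_f$, while yours is a clean dynamical obstruction and makes transparent why the finiteness of $Z(f)$ is incompatible with $f$ being a $C_{\phi_a}$-eigenvector.
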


\begin{proof}
	By the hypothesis of finite zeros, we can choose $0 < K < 1$ such that $f$ is zero-free in the annulus $\mathbb{D} - \overline{B(0,K)}$. Moreover, we can choose $n_0 \in \mathbb{N}$ such that for every $n \geq n_0$, $f \circ \phi_{a^n}(\mathbb{D}) \subseteq \mathbb{D} - \overline{ B(0,K) }$.
	
	\begin{center}
		\begin{tikzpicture}
			\draw[dotted] (0,0) circle [radius=3cm];
			\draw (0,0) circle [radius=2.0cm];
			\draw[dotted](2.675, 0) circle [radius= 0.325cm];
			\draw(0,0) -- (0,2);
			\node[label={left: $\mathbb{D}$} ] at (-3,0) {};
			\node[label={left: $K$} ] at (0,1) {};
			\node[label={below: $0$} ] at (0,0) {};
			\node[label={below: $\overline{B(0,K)}$} ] at (0,3) {};
			\node[label={right: $f \circ \phi_{a^{n_0}}
				(\mathbb{D})$} ] at (3,0) {}; 
			\node[label={below: Choosing $K$ and $n_0$.} ] at (0 ,-3) {};
		\end{tikzpicture}
	\end{center}

	If $n \geq n_0$ we claim that $\{f, f \circ \phi_{a^n} \}$ is a L.I set. In fact, consider any scalars $\alpha, \beta \in \mathbb{C}$ such that $\alpha f + \beta f \circ \phi_{a^n} = 0$. If $z_0 \in Z(f)$ we have
	
	$$\beta f \circ \phi_{a^n}(z_0) = \alpha f(z_0) + \beta f \circ \phi_{a^n}(z_0) = 0$$
	and since $f \circ \phi_{a^n}$ is zero-free, we conclude that $\beta = 0$ and, consequently, $\alpha = 0$.
\end{proof}

This result has the following interesting consequence: If the ISP is true, then every $K_f$ with $f$ satisfying $0 < |Z(f)| < \infty$ is necessarily non-minimal. On the other hand if $K_f$ is minimal for such an $f$, then the ISP is false and $K_f$ would furnish a counterexample. We note that if $|Z(f)| = 0$ or $|Z(f)| = \infty$, then $K_f$ may or may not be minimal. For instance, consider $g_1(z) = z^2 + 1$ and $g_2(z) = 1 - z$. Both functions are zero-free in the disk, but $K_{g_2}$ is minimal because $g_2$ is an eigenvector whereas $K_{g_1}$ is not minimal by Theorem \ref{limit1}. For the $|Z(f)| = \infty$ case, we first consider the following result.

\begin{prop}
	Let $w \in \mathbb{D}$. If $f(w) \neq 0$ but there exists $n_0 \in \mathbb{N}$ such that for all $n \geq n_0$ we have $f(a^nw + 1 - a^n) = 0$ then $K_f$ is not minimal.
\end{prop}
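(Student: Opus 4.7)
The plan is to construct a proper nonzero closed $C_{\phi_a}$-invariant subspace of $K_f$ using the hypothesis that the iterates $C_{\phi_a}^n f$ all vanish at $w$ for $n\geq n_0$. The key observation is that point evaluation at $w$ is a continuous linear functional on $H^2$ (since $\mathrm{ev}_w(g)=\langle g,\kappa_w\rangle$), so the condition of vanishing at $w$ is preserved under norm limits and under linear combinations.

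Concretely, I would define
\[
M:=\overline{\mathrm{span}}\{C_{\phi_a}^n f:n\geq n_0\}.
\]
First I would verify that $M$ is a closed $C_{\phi_a}$-invariant subspace of $K_f$: this is routine, since $C_{\phi_a}$ shifts the generating sequence up by one index (and $n_0+1\geq n_0$), and $C_{\phi_a}$ is bounded, so the inclusion $C_{\phi_a}M\subseteq M$ passes to the closure. Next I would check that $M\neq\{0\}$: the vector $C_{\phi_a}^{n_0}f=f\circ\phi_{a^{n_0}}$ cannot be identically zero, for otherwise $f$ would vanish on the nonempty open set $D_{n_0}=\phi_{a^{n_0}}(\mathbb{D})$ and hence on all of $\mathbb{D}$, contradicting $f(w)\neq 0$.

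The heart of the argument is then to show that $M$ is a proper subspace of $K_f$. By hypothesis, for every $n\geq n_0$ we have $(C_{\phi_a}^n f)(w)=f(a^nw+1-a^n)=0$, so every generator of $M$ lies in $\ker(\mathrm{ev}_w)$. Since $\mathrm{ev}_w$ is continuous, $M\subseteq\ker(\mathrm{ev}_w)$, i.e.\ $g(w)=0$ for every $g\in M$. But $f\in K_f$ with $f(w)\neq 0$, so $f\notin M$ and therefore $M\subsetneq K_f$.

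Putting these pieces together, $M$ is a nonzero proper closed $C_{\phi_a}$-invariant subspace of $K_f$, so $K_f$ is not minimal. There is no real obstacle here; the only subtlety is making sure that $M$ is genuinely nonzero, which is handled by the identity theorem for holomorphic functions together with $f(w)\neq 0$.
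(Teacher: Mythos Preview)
Your proof is correct and is essentially the same as the paper's: your subspace $M$ coincides with $K_{f\circ\phi_{a^{n_0}}}$, and the paper argues by contradiction that if $K_f$ were minimal then $K_f=K_{f\circ\phi_{a^{n_0}}}$, which fails because $\kappa_w$ is orthogonal to the latter (every generator vanishes at $w$) but not to the former (since $f(w)\neq 0$). Your direct construction and the paper's contradiction argument are two phrasings of the identical idea, with your version spelling out the nonzeroness of $M$ via the identity theorem a bit more explicitly.
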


\begin{proof}
	In fact, if $K_f$ is minimal then $K_f = K_{f \circ \phi_{a^{n_0}}}$ and consequently $(K_f)^{\perp} = (K_{f \circ \phi_{a^{n_0}}})^{\perp}$. Considering the reproducing kernel $\kappa_w$ we observe that $\kappa_w \in (K_{f \circ \phi_{a^{n_0}}})^{\perp}$ since $f(a^nw + 1 - a^n) = 0$ ($n \geq n_0$) but $\kappa_w \notin (K_f)^{\perp}$ because $f(w) \neq 0$. This is a contradiction.
\end{proof}
Consider the Blaschke sequence $\{\phi_{a^n}(0)\}_{n \geq 2} = (1 - a^n)_{n \geq 2}$. The corresponding Blaschke product
	$$B(z) = \prod\limits_{n = 2}^{\infty} \left( \frac{1 - a^n - z }{ 1 - \overline{( 1 - a^n)}z } \right).$$ 
	has infinitely many zeros. Note that $B(0) \neq 0$ but $B(a^n 0 + 1 - a^n) = B(1 - a^n) = 0$ for $n \geq 2$, so by the previous result $K_B$ is not minimal. The only remaining case is when $f$ has infinitely many zeros and yet $K_f$ is minimal. 
	
	Let $a = \frac{1}{2}$. Consider the function $f = e_0 + f_s$ where $s = \frac{2 \pi i }{\log a}$ and $e_0$ is the constant function $1$. Note that $e_0 + f_s$ is an $C_{\phi_{a}}$-eigenvector:
	\[
	C_{\phi_a}(e_0  + f_s) = e_0  + a^sf_s = a^{\frac{2 \pi i}{\log a}}f_s = e_0  + f_s.
	\]
	So $K_f$ is minimal. Moreover, note that
	\[
	(e_0  + f_s)(1 - \sqrt{2}) = 1 + f_s(1 - \sqrt{2}) = 1 + e^{ \frac{2 \pi i }{- 2 \log \sqrt{2}} \log \sqrt{2}  } = 1 + e^{-\pi i} = 0
	\]
	Now, consider the sequence $\{\phi_{a^n}(1 - \sqrt{2})\}_{n \in \mathbb{N}} \subseteq \mathbb{D}$. Then
	\begin{align*}
		(e_0  + f_s)(\phi_{a^n}(1 - \sqrt{2})) = C_{\phi_{a}}^n(e_0  + f_s)(1 - \sqrt{2}) =(e_0 + f_s)(1 - \sqrt{2}) = 0
	\end{align*}
	and thus $f$ has infinitely many zeros in $\mathbb{D}$. The final result of this section shows that we can always find a function in $K_f$ that is orthognonal to $f$.

\begin{prop}
	Suppose that $0 < |Z(f)| < \infty$ and let $z_0 \in Z(f)$. There exists a non-zero $g \in K_f$ such that $\langle g,h \rangle = 0$ for every $h \in K_f$ such that $h(z_0) = 0$.
\end{prop}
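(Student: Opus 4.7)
The plan is to exhibit $g$ as the orthogonal projection of the reproducing kernel $\kappa_{z_0}$ onto $K_f$, and to use the finiteness of $Z(f)$ to guarantee that this projection is non-zero.

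First I would set $N := \{h \in K_f : h(z_0) = 0\}$, which is a closed subspace of $K_f$ since it is the intersection of $K_f$ with $\ker E_{z_0}$, where $E_{z_0}(h) = h(z_0) = \langle h, \kappa_{z_0} \rangle$ is a bounded linear functional on $H^2$. The desired $g$ will be any non-zero element of the orthogonal complement $K_f \ominus N$, so the crux is to show that $N$ is a proper subspace of $K_f$.

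This is exactly where the finiteness of the zero set enters, and the argument mirrors the one used in Proposition \ref{numberof0}. Since $|Z(f)| < \infty$, there is some $0<r<1$ such that $f$ is zero-free on the annulus $\mathbb{D}\setminus\overline{B(0,r)}$, and since $\phi_{a^n}(\mathbb{D})=D_n$ shrinks to the point $1$, for all sufficiently large $n$ we have $\phi_{a^n}(\mathbb{D})\subseteq \mathbb{D}\setminus\overline{B(0,r)}$. Consequently $C_{\phi_a}^n f = f\circ\phi_{a^n}$ is zero-free on $\mathbb{D}$; in particular $(C_{\phi_a}^n f)(z_0) \neq 0$. Since $C_{\phi_a}^n f \in K_f$, we obtain an element of $K_f$ not lying in $N$, so $N \subsetneq K_f$.

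Now define $g := P_{K_f}\kappa_{z_0}$, where $P_{K_f}$ is the orthogonal projection onto $K_f$. For any $h \in N$ we have
\[
\langle h, g \rangle = \langle h, P_{K_f}\kappa_{z_0}\rangle = \langle h, \kappa_{z_0}\rangle = h(z_0) = 0,
\]
which gives $\langle g, h\rangle = 0$. To see that $g \neq 0$, observe that if $g = 0$, then $\kappa_{z_0} \perp K_f$, hence $\langle h,\kappa_{z_0}\rangle = h(z_0) = 0$ for every $h \in K_f$, contradicting the existence of the function $C_{\phi_a}^n f$ produced above with $(C_{\phi_a}^n f)(z_0)\neq 0$. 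This completes the plan. The only mildly non-trivial step is the non-vanishing of $g$, which is an easy consequence of the annulus argument; no genuine obstacle is expected.
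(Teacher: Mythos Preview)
Your proposal is correct and follows essentially the same approach as the paper: both arguments identify $N=\{h\in K_f:h(z_0)=0\}$ as a proper closed subspace of $K_f$ via the annulus argument from Proposition~\ref{numberof0}, and then take $g$ in the orthogonal complement $K_f\ominus N$. The only cosmetic difference is that you exhibit $g$ explicitly as $P_{K_f}\kappa_{z_0}$, whereas the paper invokes the first isomorphism theorem to show $K_f\ominus N$ is one-dimensional and then picks any non-zero element there; your $g$ is of course a generator of that same line.
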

\begin{proof}
	Let $z_0 \in \mathbb{D}$ a zero of $f$ and consider the continuous map $E_{z_0}: K_f \to \mathbb{C}$ which is exactly the restriction of the evaluation map at $z_0$ defined in $H^2$. Note that $E_{z_0}$ is surjective because there exists $n \geq n_0$ such that $C_{\phi_a}^{n}f(z_0) \neq 0$. So,
	
	$$K_f \ominus Ker_{E_{z_0}} \simeq  \dfrac{K_f}{Ker_{E_{z_0}}} \simeq \mathbb{C}.$$
	which implies that $K_f \ominus Ker_{E_{z_0}}$ is a one-dimensional space. Let $g \in K_f \ominus Ker_{E_{z_0}}$ a non-null element. Thus $\langle g,h \rangle = 0$ whenever $h \in K_f$ is such that $h(z_0) = 0$.
	\end{proof}

\section{Cyclicity and universality}

We end this article with a result that highlights a connection between cyclicity and universality. We note that the best known examples of universal operators are adjoints of analytic Toeplitz operators $T_\phi f=\phi f$ for $\phi\in H^\infty$, $f\in H^2$ or those that are similar to them (see \cite{Cowen- Gallardo 1},\cite{Cowen- Gallardo 2},\cite{Cowen- Gallardo 3}), and all such coanalytic Toeplitz operators are cyclic (see \cite{Wogen}).

\begin{thm}\label{newcar}
	If $T$ is a closed range cyclic operator with infinite dimensional kernel on a Hilbert space $H$, then $T$ is universal.
\end{thm}

\begin{proof}
	
	By the Caradus-Pozzi criterion (Theorem \ref{Caradus-Pozzi}) it is enough to prove that $T(H)$ has finite codimension. In fact for any cyclic $T$ the dimension of $T(H)^\perp$ is either $0$ or $1$. Although this may be known to experts, we provide a proof for the sake of completeness. Let $f$ be a cyclic vector for $T \in B(H)$ and define $N := \overline{\mathrm{span}_{n\geq 1}\{T^nf\}}$. Let $P: H \to N^{\perp}$ be the orthogonal projection onto $N^{\perp}$. Now let $g$ be any element in $T(H)^{\perp}$. Then we have $\langle g, T^nf \rangle = 0$ for $n\geq 1$ and consequently $g \in N^{\perp}$. 
	Since $f$ is a cyclic vector, we can find a sequence $g_n \in \overline{\mathrm{span}_{n\geq 0}\{T^nf\}}$ such that $g_n \to g$. If we write $g_n = \alpha_n f + t_n$ where the $\alpha_n$ are scalars and $t_n \in N$, then we obtain $\alpha_n f + t_n \to g$. Applying $P$ to this we conclude that $\alpha_nPf \to Pg=  g$ and hence $g=\alpha Pf$ for some $\alpha\in\mathbb{C}$. Since $g$ is an arbitrary element of $T(H)^{\perp}$, the latter space is at most one-dimensional. Finally $\mathrm{codim} \ T(H) =\mathrm{dim} \ T(H)^\perp\leq 1$ because $T$ has closed range and the result follows. 
\end{proof}

Since all known examples of universal operators obtained via the Caradus criterion have closed range, it leads one to suspect whether an operator having closed range is necessary or sufficient (assuming $\infty$-dimensional kernel and range) for universality. But the following counterexamples generously provided by Professor Jonathon Partington prove otherwise. Let $U$ be a universal operator and $V$ an arbitrary operator without closed range (e.g. multiplication by $z-1$ on $H^2$). Then $U \oplus V$ is universal on $H\oplus H$, but does not have closed range. And conversely, consider the operator defined on $H \times H$ by $T(x,y)=(0,x)$ for $x\in H$. Then $T$ has closed infinite dimensional kernel and range,  both equal to $0 \times H$.  Therefore $T^2=0$, so it cannot be universal.

Finally, we note that Theorem \ref{newcar} may simplify the proofs of some known results.  Recall that if $\phi \in LFT(\mathbb{D})$ is hyperbolic then $C_{\phi} - \lambda$ is universal for all eigenvalues $\lambda$. The classical proof of the automorphic case (see \cite{Nordgren-Rosenthal-Wintrobe}) and that of the recent non-automorphic case (see \cite{noor}) are both elaborate and involved. An elegant proof of the automorphic case was found recently by Cowen and Gallardo-Gutiérrez \cite{Cowen- Gallardo 3}. Theorem \ref{newcar} suggests an approach based on showing that the range of $C_{\phi} - \lambda$ is closed for some eigenvalue $\lambda$, rather than proving surjectivity. That is because when $\phi$ is hyperbolic, then $C_{\phi}$ is hypercyclic and in particular cyclic (see \cite[Theorem 1.47]{lineardynamics}). Also for every $\lambda \in \mathbb{C}$ and bounded linear operator $T$ we have
$$\mathrm{span} \{f, Tf, T^2f, \ldots \} = \mathrm{span} \{f, (T - \lambda)f, (T - \lambda)^2f, \ldots \}.$$
It follows that $T$ is cyclic if and only if $T-\lambda$ is cyclic. Therefore $C_{\phi} - \lambda$ is cyclic for all $\lambda\in\mathbb{C}$  if $\phi$ is hyperbolic. Moreover for $\lambda$ in the point spectrum of $\phi$, the kernel of $C_{\phi} - \lambda$ is infinite dimensional (\cite{Cowen-Maccluer}, Lemma $7.24$ and Theorem $7.4$). Therefore the closure of the range of $C_{\phi} - \lambda$ gives universality by Theorem \ref{newcar}.

\section*{Acknowledgements}
This work constitutes a part of the doctoral thesis of the second author, partially supported by the Conselho Nacional de Desenvolvimento Cient\'{i}fico e Tecnol\'{o}gico - CNPq Brasil, under the supervision of the third named author.

\bibliographystyle{amsplain}

\begin{thebibliography}{00}
	
	\bibitem{lineardynamics} F. Bayart and E. Matheron, Dynamics of linear operators. Cambridge Tracts in Mathematics 179, Cambridge University Press  (2009).
	
	\bibitem{shapirocyclic} P. Bourdon and J. H. Shapiro, Cyclic phenomena for composition operators. Mem. Amer. Math. Soc., 596 (1997).
	\bibitem{caradus} S. R. Caradus, Universal operators and invariant subspaces, Proc. Amer. Math. Soc. 23 (1969), 526-527. 
	
	\bibitem{noor} J. R. Carmo and S. W. Noor, Universal composition operators. J. Operator Theory (2022), 137-156.
	

	
	\bibitem{modernisp} I. Chalendar and J. R. Partington, Modern approaches to the Invariant Subspace Problem, Cambridge University Press, 2011.
	
	\bibitem{Chkliar} V. Chkliar, Eigenfunctions of the hyperbolic composition operator, Integral Equations Operator Theory (3) (1997) 364-367.
	
	\bibitem{Cowen-Maccluer} C. C. Cowen and B. MacCluer, Composition Operator on Spaces of Analytic Functions . Studies in Advanced Mathematics. CRC Press, Boca Raton, 1995.
	

	\bibitem{Cowen- Gallardo 1} Carl C. Cowen and Eva A. Gallardo Gutiérrez, Consequences of Universality Among Toeplitz Operators, J. Math. Anal. Appl. 432(2015), 484–503.
	
	\bibitem{Cowen- Gallardo 2} Carl C. Cowen and Eva A. Gallardo Gutiérrez, Rota’s universal operators and invariant subspaces in Hilbert
	spaces, J. Funct. Anal. 271(2016), 1130–1149
	
	\bibitem{Cowen- Gallardo 3} Carl C. Cowen and Eva A. Gallardo Gutiérrez, A new proof of a Nordgren, Rosenthal and Wintrobe theorem on universal operators. Problems and recent methods in operator theory, 97–102, Contemp. Math., 687, Amer. Math. Soc., Providence, RI, 2017.
	
	\bibitem{Gallardo-Gorkin} E. A. Gallardo-Guti\'{e}rrez, P. Gorkin, Minimal invariant subspaces for composition operators, J. Math. Pures Appl. 95 (2011) 245-259.
	
	
	\bibitem{Hurst} P.R. Hurst, Relating composition operators on different weighted Hardy spaces, Arch. Math. 68 (1997) 503-513.
	
	
	\bibitem{Matache minimal} V. Matache, On the minimal invariant subspaces of the hyperbolic composition operator, Proc. Amer. Math. Soc. 119 (3) (1993) 837-841.
	
	\bibitem{Matache eigenfunction} V. Matache, The eigenfunctions of a certain composition operator, Contemp. Math. vol. 213 (1998) 121-136.
	
	
	
	\bibitem{Mortini} R. Mortini, Cyclic subspaces and eigenvectors of the hyperbolic composition operator, Sém. Math. Luxembourg, in: Travaux Mathématiques, Fasc. VII, Centre Univ. Luxembourg, Luxembourg, 1995, pp. 69–79.
	
	\bibitem{Nordgren-Rosenthal-Wintrobe} E. Nordgren, P. Rosenthal, F.S. Wintrobe, Invertible composition operators on $H^p$, J. Funct. Anal. 73 (1987), 324-344.
	
	
	\bibitem{pozzi} E. Pozzi, Universality of weighted composition operators on $L^2([0,1])$ and Sobolev spaces. Acta Sci. Math., 78 (2012) 609-642.
	
	\bibitem{rota} G. C. Rota, On models for linear operators, Comm. Pure Appl. Math. 13 (1960), 469-472.

	
	\bibitem{Shapiro Annals} J. H. Shapiro, The essential norm of a composition operator. Ann. of Math. 125, 375-404 (1987).
	
	\bibitem{Shapiro Text} J. H. Shapiro, Composition operators and classical function theory. Universitext, Springer-Verlag (1993).
	
	

	
	\bibitem{Wogen} W. R. Wogen. On some operators with cyclic vectors. Indiana Univ. Math. J., 27 (1978) 163-171.
\end{thebibliography}

\end{document}